\definecolor{red}{rgb}{1,0,0}
\definecolor{blue}{rgb}{.2,.2,.8}
\def\mex{\textrm{mex}}
\newtheorem{theorem}{Theorem}[section]
\theoremstyle{definition}
\begin{document}

\title[Combinatorial proofs of Merca's  theorems]{Combinatorial proofs of Merca's identities involving the sum of different parts congruent to $r$ modulo $m$ in all partitions of $n$}
\author{Cristina Ballantine}\address{Department of Mathematics and Computer Science\\ College of the Holy Cross \\ Worcester, MA 01610, USA \\} 
\email{cballant@holycross.edu}

\maketitle

\begin{abstract} We give combinatorial proofs of several recent results due to Merca on the sum of different parts congruent to $r$ modulo $m$ in all partitions of $n$. The proofs make use of some well known involutions from the literature and some new involutions introduced here. 
\end{abstract}

{\bf Keywords:} partitions, bijections, involutions, recurrences.

{\bf MSC 2020:} 11P81, 11P84, 05A17, 05A19

\section{Introduction}

A partition $\lambda$ of $n$ is a non-increasing sequence $\lambda= (\lambda_1, \lambda_2, \ldots, \lambda_\ell)$ of positive integers that add up to $n$. We refer to the integers $\lambda_i$ as the parts of $\lambda$.  As usual, we denote by $p(n)$  the number of  partitions of $n$. Note that $p(x)=0$ if $x$ is not a non-negative integer, and since the empty partition $\emptyset$ is the only partition of $0$, we have that $p(0)=1$.

Let $m,n$, and $r$ be nonnegative integers such that $0\leq r<m$. Denote by $a_{r,m}(n)$ the sum of all different parts congruent to $r$ modulo $m$ in all partitions of $n$. Thus, for a partition $\lambda$ of $n$, a part $mj+r$ of $\lambda$ contributes $mj+r$ to $a_{r,m}(n)$ regardless of its multiplicity.  We denote by $s_m(n)$ the sum of different parts that appear at least $m$ times in partitions of $n$.  Thus, for a partition $\lambda$ of $n$, a part $a$ of $\lambda$  that occurs at least $m$ times contributes $a$ to $s_m(n)$ regardless of its multiplicity.

Recently Merca \cite{M23} proved several results relating $a_{m,r}(n)$, $s_m(n)$, and numbers of restricted partitions. In this article, we give combinatorial proofs of several  results in \cite{M23}. Combinatorial proofs of \cite[Theorem 1.3 and Corollary 1.4]{M23} are given in \cite{MS}. In \cite{M23} the author gives  a combinatorial proof of Theorem 1.6. . 
We prove combinatorially Theorem 1.3 and Corollaries 4.2, 4.4, 4.6, 4.7(i), 4.9, 4.10, 5.2, 5.3, 6.3, 6.3, 7.2, 7.3 of \cite{M23}. The corollaries are limiting cases of inequalities obtained in \cite{M23} by truncating theta series.

\section{Combinatorial proofs of theorems of Merca}

We first introduce some notation. 
We denote by $\mathcal P(n)$ the set of partitions of $n$ and by $\mathcal P$ the set of all partitions. We use this convention is used for all other sets of partitions: if $\mathcal A(n)$ denotes a set of partitions of $n$, then $$\mathcal A=\bigcup_{n\geq0}\mathcal A(n).$$ We write $\lambda\vdash n$ to mean that $\lambda$ is a partition of $n$. We also write $|\lambda|=n$ to mean that the parts of $\lambda$ add up to $n$. The number of parts of $\lambda$ is called the length of $\lambda$ and is denoted by $\ell(\lambda)$. If $\lambda^{(1)}, \lambda^{(2)}, \ldots, \lambda^{(k)}$ are partitions, we  write $(\lambda^{(1)}, \lambda^{(2)}, \ldots, \lambda^{(k)})\vdash n$ to mean $|\lambda^{(1)}|+ |\lambda^{(2)}|+ \cdots +|\lambda^{(k)}|=n$. 

We define $p_{e,o}(n):=p_e(n)-p_o(n),$ where $p_e(n)$ (respectively $p_o(n)$) is the number of partitions of $n$ with an even (respectively odd) number of parts. 

An overpartition of $n$ is a partition of $n$ in which the first occurrence of a part may be overlined. We denote by $\overline{\mathcal P}(n)$ the set of overpartitions of $n$. As in the case of partitions, we define $\overline p_{e,o}(n):=\overline p_e(n)-\overline p_o(n),$ where $\overline p_e(n)$ (respectively $\overline p_o(n)$) is the number of overpartitions of $n$ with an even (respectively odd) number of parts. 

We denote by $\mathcal Q(n)$ the set of partitions of $n$ into distinct parts and write $q(n)$ for $|\mathcal Q(n)|$. Similarty, we denote by $q_{odd}(n)$ (respectively $q_{even}(n)$) the number of partitions of $n$ into distinct parts all odd (respectively even). 

\begin{theorem}{\cite[Theorem 1.3]{M23}} \label{T1.3}Let $m,n$, and $r$ be nonnegative integers such that $0\leq r<m$. We have 
\begin{itemize}
\item[(i)] $\displaystyle a_{r,m}(n)=\sum_{j=0}^\infty (mj+r)p(n-mj-r)$; 
\item[(ii)] $\displaystyle s_m(n)= \sum_{j=0}^\infty jp(n-mj)$.
\end{itemize}
\end{theorem}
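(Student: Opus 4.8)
The plan is to prove both identities by the same double-counting strategy, reorganizing the sum $a_{r,m}(n)$ (respectively $s_m(n)$) according to which distinct eligible part we are summing, rather than according to the partition. For part (i), I would write
\[
a_{r,m}(n)=\sum_{\lambda\vdash n}\ \sum_{\substack{k\geq 1\\ k\equiv r\!\!\pmod m\\ k\in\lambda}} k
=\sum_{\substack{k\geq 1\\ k\equiv r\!\!\pmod m}} k\cdot\#\{\lambda\vdash n : k \text{ is a part of }\lambda\}.
\]
The inner count is exactly the number of partitions of $n$ having at least one part equal to $k$; removing one copy of $k$ is a bijection from that set onto $\mathcal P(n-k)$, so the count equals $p(n-k)$. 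Writing $k=mj+r$ with $j\geq 0$ (and noting $k\geq 1$ is automatic, or handled by $p$ vanishing on negatives/noninteger arguments when $r=0$) yields exactly $\sum_{j\geq 0}(mj+r)p(n-mj-r)$.

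For part (ii) the same idea applies, but now the object summed is a part $a$ occurring with multiplicity at least $m$, and it contributes $a$ once. I would write
\[
s_m(n)=\sum_{a\geq 1} a\cdot\#\{\lambda\vdash n : \text{mult}_\lambda(a)\geq m\}.
\]
The set being counted is in bijection with $\mathcal P(n-ma)$ by deleting exactly $m$ copies of $a$ (this is a clean bijection: given a partition of $n-ma$, adjoin $m$ copies of $a$; going back, strip $m$ copies of $a$, which is possible precisely because the multiplicity was $\geq m$). Hence the count is $p(n-ma)$ and $s_m(n)=\sum_{a\geq 1} a\,p(n-ma)=\sum_{j\geq 0} j\,p(n-mj)$, the $j=0$ term contributing nothing.

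I expect the main obstacle to be essentially bookkeeping rather than conceptual: making sure the interchange of the two (finite, since $p(n-k)=0$ once $k>n$) sums is stated cleanly, and being careful with the edge case $r=0$ in (i), where ``parts congruent to $0$'' must still be positive, so the $j=0$ term is $r\cdot p(n)=0$ and contributes nothing — consistent with the fact that a part must be at least $1$. It is also worth remarking that both statements are the ``trivial'' direction of Theorem~\ref{T1.3} and that the interest of the paper lies in the less obvious identities and inequalities that follow; accordingly I would keep this proof short and emphasize that the single underlying bijection (add/remove a fixed number of copies of a fixed part) is the engine behind it.
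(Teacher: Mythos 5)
Your proof is correct and is essentially the paper's argument: the paper phrases (i) via overpartitions with exactly one overlined part equal to $mj+r$ (which is just a marked version of your count of partitions containing the part $mj+r$), and both (i) and (ii) come down to the same remove-one-copy, respectively remove-$m$-copies, bijections onto $\mathcal P(n-mj-r)$ and $\mathcal P(n-mj)$ that you use. No changes needed.
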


\begin{proof} We note that the combinatorial proof of this theorem is implicit in the combinatorial proof of \cite[Theorem 1.6]{M23}. We write it here in clearer form since it is used in subsequent proofs. 

(i) Let $\mathcal A_{r,j, m}(n)$ be the set of overpartitions of $n$ with a exactly one part  overlined and only a part equal to $mj+r$ maybe overlined. Clearly $$a_{m,r}(n)= \sum_{j=0}^\infty (mj+r)|\mathcal A_{r,j,m}(n)|.$$ The transformation that removes the overlined part from a partition is a bijection from $\mathcal A_{r,j,m}(n)$ to $\mathcal P(n-mj-r)$. This completes the proof of (i). 

(ii) Let $\mathcal S_{j,m}(n)$ be the set of partitions of $n$ in which part $j$ occurs at least $m$ times. Clearly $$s_{m}(n)= \sum_{j=0}^\infty j|\mathcal S_{j,m}(n)|.$$ The transformation that removes $m$ parts equal to $j$ from a partition is a bijection from $\mathcal S_{a,m}(n)$ to $\mathcal P(n-mj)$. This completes the proof of (ii). 
\end{proof}

\begin{theorem}{\cite[Corollary 4.2]{M23}} \label{Cor4.2}Let $m,n$, and $r$ be nonnegative integers such that $0\leq r<m$. Then \begin{equation} \label{C4.2} a_{m,r}(n)+2\sum_{j=1}^\infty (-1)^j a_{m,r}(n-j^2)= \sum_{j=0}^\infty (mj+r)p_{e-o}(n-mj-r).\end{equation}
\end{theorem}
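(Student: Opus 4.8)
The plan is to interpret both sides of \eqref{C4.2} as weighted counts of certain (over)partition pairs and to construct a sign-reversing involution that cancels the unwanted terms. Start from Theorem~\ref{T1.3}(i), which gives $a_{m,r}(n-j^2)=\sum_{k\ge 0}(mk+r)p(n-j^2-mk-r)$ for each $j$. Substituting this into the left-hand side and interchanging the order of summation, the coefficient of $(mk+r)$ becomes $\sum_{j\in\Z}(-1)^j p(n-mk-r-j^2)$, where the $j=0$ term accounts for the lone $a_{m,r}(n)$ and the factor $2$ absorbs the symmetry $j\leftrightarrow -j$ (so we may sum over all of $\Z$ with the convention that only $j\ge 0$ contributes distinct squares, halving appropriately — I would write it cleanly as a sum over $\Z$). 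Thus \eqref{C4.2} is equivalent, term by term in $k$, to the classical identity
\begin{equation*}
\sum_{j\in\Z}(-1)^j p(N-j^2)=p_{e-o}(N),
\end{equation*}
with $N=n-mk-r$. So the whole statement reduces to this one well-known fact, and the real content is a combinatorial proof of it.

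The key step is therefore to prove $\sum_{j\in\Z}(-1)^j p(N-j^2)=p_{e-o}(N)$ combinatorially. The standard approach: the left side counts pairs $(j,\mu)$ with $\mu\vdash N-j^2$, weighted by $(-1)^j$; equivalently, fixing the triangular/square bookkeeping, one uses the interpretation of $p_{e-o}(N)$ via Franklin-type arguments. I would instead recall (and cite from the literature, e.g.\ via the pentagonal number theorem analogue or a direct Sylvester-style bijection) that $p_{e-o}(N)$ equals the number of partitions of $N$ into distinct odd parts counted with sign $(-1)^{\text{number of parts}}$, and that the generating function identity $\sum_{j}(-1)^j q^{j^2}=\prod_{i\ge1}\frac{1-q^{2i-1}}{1+q^{2i-1}}$ (a specialization of Jacobi's triple product) matches $\sum_j (-1)^j p(N-j^2)$ against $p_{e-o}(N)$ after multiplying by $1/(q;q)_\infty$. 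For a purely bijective treatment I would exhibit an involution on pairs $(j,\mu)$ that changes the parity of $j$ while preserving $|\mu|+j^2$ on all pairs except those corresponding to partitions of $N$ with a distinguished structure, the survivors being in parity-preserving bijection with partitions of $N$ counted by number of parts.

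The main obstacle I anticipate is producing a clean involution for $\sum_{j\in\Z}(-1)^j p(N-j^2)=p_{e-o}(N)$ that meshes with the framework of known involutions the paper relies on, rather than just quoting the generating-function identity; in particular, one must handle the ``diagonal'' fixed points (where $j^2$ cannot be decreased or increased within a part of $\mu$) and check that these survivors are exactly enumerated by $p_{e-o}(N)$ with the correct sign. A secondary bookkeeping nuisance is the $j\leftrightarrow -j$ symmetry and the factor of $2$: the term $j=0$ must be treated separately from $j\ge 1$, and I expect the cleanest exposition replaces $a_{m,r}(n)+2\sum_{j\ge1}(-1)^j a_{m,r}(n-j^2)$ by $\sum_{j\in\Z}(-1)^j a_{m,r}(n-j^2)$ from the outset, reducing the risk of sign errors. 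Once the scalar identity is in hand, reassembling via Theorem~\ref{T1.3}(i) and re-summing over $k$ gives \eqref{C4.2} immediately, so no further work is needed after that point.
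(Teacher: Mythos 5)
Your reduction is sound: rewriting the left side as $\sum_{j\in\Z}(-1)^j a_{m,r}(n-j^2)$ and invoking Theorem~\ref{T1.3}(i) term by term does reduce \eqref{C4.2} to the scalar identity $\sum_{j\in\Z}(-1)^j p(N-j^2)=p_{e-o}(N)$ with $N=n-mk-r$, and this is exactly the skeleton of the paper's argument. The problem is that you stop there: the identity $\sum_{j\in\Z}(-1)^j p(N-j^2)=p_{e-o}(N)$ is precisely the combinatorial heart of the statement, and you offer for it only a generating-function verification (the specialization of Jacobi's triple product, i.e.\ Gauss's identity, divided by $(q;q)_\infty$) together with the remark that you ``would exhibit an involution'' whose construction you explicitly list as the main anticipated obstacle. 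Since the whole point of the result in this paper is a combinatorial proof (the analytic statement is already Merca's), an unproduced involution is a genuine gap, not a bookkeeping issue.

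For comparison, the paper fills this gap in two steps. First it quotes Andrews's involution proving bijectively that $\sum_{j\in\Z}(-1)^jq^{j^2}=\sum_{N\ge0}\overline p_{e-o}(N)q^N$, which converts the signed sum $\sum_{j\in\Z}(-1)^j p(N-j^2)$ into a signed count of pairs $(\alpha,\beta)$ with $\alpha$ a partition and $\beta$ an overpartition of total size $N$, signed by the parity of $\ell(\beta)$. Second, it introduces a new involution $\psi$ on pairs $(\alpha,\eta)$ with $\eta$ having distinct parts (move the largest part of $\alpha$ to $\eta$ if $\alpha_1>\eta_1$, otherwise move $\eta_1$ to $\alpha$), applied to $(\alpha,\overline\beta)$ with the nonoverlined part $\widetilde\beta$ held fixed; the survivors are exactly the triples $(\emptyset,\emptyset,\widetilde\beta)$, and summing over $\widetilde\beta$ with the sign $(-1)^{\ell(\widetilde\beta)}$ yields $p_{e-o}(N)$. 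Your proposal would be complete if you either reproduced such a two-step involutive argument or constructed directly the sign-reversing involution on pairs $(j,\mu)$ that you describe only in outline; as written, the fixed-point analysis you defer is exactly the content that needs to be supplied.
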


\begin{proof} In \cite{A72},  Andrews constructed an involution and proved  combinatorially that \begin{equation}\label{A1} \sum_{n=-\infty}^\infty (-1)^nq^{n^2}=\sum_{n=0}^\infty \overline p_{e-o}(n)q^n.\end{equation} We define
$$\mathcal P \overline{\mathcal P}(n):= \{(\alpha, \beta)\vdash n \mid \alpha \in \mathcal P, \beta\in \overline{\mathcal P}\}.$$ Andrews' proof together with the combinatorial proof of   Theorem \ref{T1.3} shows combinatorially that  that 
\begin{align*}a_{m,r}(n)&+2\sum_{j=1}^\infty (-1)^j a_{m,r}(n-j^2)\\ & =\sum_{j=0}^\infty (mj+r)\left(|\{(\alpha, \beta)\in  \mathcal P \overline{\mathcal P}(n-mj-r) \mid \ell(\beta) \text{ even}\}|\right.\\  &\qquad \qquad\qquad-\left. |\{(\alpha, \beta)\in  \mathcal P \overline{\mathcal P}(n-mj-r) \mid \ell(\beta) \text{ odd}\}|\right).\end{align*}

Let $n\geq 0$ and define the set $$\mathcal{PQ}(n):=\{(\alpha, \eta)\vdash n \mid \alpha \in \mathcal P, \eta \in \mathcal Q\}.$$  If $n\geq 1$, we define an involution $\psi$ on $\mathcal{PQ}(n)$ by  $$\psi(\alpha, \eta):=\begin{cases}(\alpha\setminus (\alpha_1), \eta\cup (\alpha_1) & \text{ if } \alpha_1>\eta_1, \\ (\alpha\cup (\eta_1), \eta\setminus  (\eta_1) & \text{ if } \alpha_1\leq\eta_1.\end{cases}$$ Clearly, $\psi(\alpha, \eta)$ reverses the parity of $\ell(\eta)$. Thus, if $n\geq 1$,  $$|\{(\alpha, \eta) \in \mathcal{PQ}(n)\mid  \ell(\eta) \text{ even}\}|-|\{(\alpha, \eta) \in \mathcal{PQ}(n)\mid  \ell(\eta) \text{ odd}\}|=0,$$ and if $n=0$ the difference is $1$ since $\{(\alpha, \eta) \in \mathcal{PQ}(0)\}= \{(\emptyset, \emptyset)\}.$

To prove combinatorially that \eqref{C4.2} holds, write $(\alpha, \beta)\in \mathcal P \overline{\mathcal P}(n)$ as $(\alpha, \overline\beta, \widetilde \beta)$, where $\overline \beta$ is the  partition consisting of the overlined parts of $\beta$ and $\widetilde \beta$ is the  partition consisting of the nonoverlined parts of $\beta$. 
Fix $m, n, j,r$ nonnegative integers  such that $0\leq r<m$. 
Furthermore fix a partition $\widetilde \beta$. Define $$\mathcal P \overline{\mathcal P}_{\widetilde \beta}(n-mj-r):=\{(\alpha, \beta)=(\alpha, \overline \beta,\widetilde \beta) \in \mathcal P \overline{\mathcal P}(n-mj-r)\},$$ the set of pairs $(\alpha, \beta)$ in $\mathcal P \overline{\mathcal P}(n-mj-r)$ such that the nonoverlined parts in the overpartition $\beta$ are precisely the parts of $\widetilde \beta$. Using the involution $\psi$ on $(\alpha, \overline \beta)$ (with the overlines removed), we see that \begin{align*}|\{(\alpha, \overline \beta,\widetilde \beta) \in \mathcal P \overline{\mathcal P}_{\widetilde \beta}(n-mj-r)\mid \ell(\overline \beta) \text{ even}\}|& -|\{(\alpha, \overline \beta,\widetilde \beta) \in \mathcal P \overline{\mathcal P}_{\widetilde \beta}(n-mj-r)\mid \ell(\overline \beta) \text{ odd}\}|\\ & = |\{(\emptyset, \emptyset,\widetilde \beta) \in \mathcal P \overline{\mathcal P}(n-mj-r)\}|=1.\end{align*} Summing after all $\widetilde \beta$, we obtain
\begin{align*}&  |\{(\alpha, \beta)\in  \mathcal P \overline{\mathcal P}(n-mj-r) \mid \ell(\beta) \text{ even}\}|- |\{(\alpha, \beta)\in  \mathcal P \overline{\mathcal P}(n-mj-r) \mid \ell(\beta) \text{ odd}\}|\\ &  \  =|\{(\emptyset, \emptyset,\widetilde \beta) \in \mathcal P \overline{\mathcal P}(n-mj-r)\mid\ell(\widetilde \beta) \text{ even}\}|- |\{(\emptyset, \emptyset,\widetilde \beta) \in \mathcal P \overline{\mathcal P}(n-mj-r)\mid\ell(\widetilde \beta) \text{ even}\}|\\ & \  =p_{e-o}(n-mj-r).\end{align*}
\end{proof}

\begin{theorem}{\cite[Corollary 4.4 (i)]{M23}} \label{Cor4.4} Let $m,n$ be nonnegative integers. Then \begin{equation} \label{C4.4} s_{m}(n)+2\sum_{j=1}^\infty (-1)^j s_{m}(n-j^2)= \sum_{j=0}^\infty jp_{e-o}(n-mj).\end{equation}
\end{theorem}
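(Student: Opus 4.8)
The plan is to run the argument of Theorem~\ref{Cor4.2} essentially verbatim, replacing $a_{m,r}$ by $s_m$ and using part~(ii) of Theorem~\ref{T1.3} in place of part~(i). Recall from the proof of Theorem~\ref{T1.3}(ii) that, writing $\mathcal S_{j,m}(N)$ for the set of partitions of $N$ in which the part $j$ occurs at least $m$ times, the map that deletes $m$ copies of $j$ is a bijection $\mathcal S_{j,m}(N)\to\mathcal P(N-mj)$, so that $s_m(N)=\sum_{j\geq 0}j\,|\mathcal S_{j,m}(N)|=\sum_{j\geq 0}j\,p(N-mj)$. Substituting this for $s_m(n)$ and for each $s_m(n-k^2)$ on the left-hand side of \eqref{C4.4} and interchanging the order of summation, the left-hand side becomes $\sum_{j\geq 0}j\bigl(p(n-mj)+2\sum_{k\geq 1}(-1)^k p(n-mj-k^2)\bigr)$. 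Just as in the proof of Theorem~\ref{Cor4.2}, Andrews' identity \eqref{A1} together with the combinatorial proof of Theorem~\ref{T1.3} interprets $p(N)+2\sum_{k\geq 1}(-1)^k p(N-k^2)$ as the signed count $\sum_{(\alpha,\beta)\in\mathcal P\overline{\mathcal P}(N)}(-1)^{\ell(\beta)}$, so
\begin{align*}
s_m(n)+2\sum_{k\geq 1}(-1)^k s_m(n-k^2)
&=\sum_{j\geq 0}j\Bigl(|\{(\alpha,\beta)\in\mathcal P\overline{\mathcal P}(n-mj):\ell(\beta)\text{ even}\}|\\
&\qquad\qquad-|\{(\alpha,\beta)\in\mathcal P\overline{\mathcal P}(n-mj):\ell(\beta)\text{ odd}\}|\Bigr).
\end{align*}

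Next I would evaluate each inner signed count exactly as in Theorem~\ref{Cor4.2}. Write $\beta=(\overline\beta,\widetilde\beta)$, where $\overline\beta$ consists of the overlined parts of $\beta$ and $\widetilde\beta$ of the nonoverlined parts; fix $\widetilde\beta$ and apply the involution $\psi$ on $\mathcal{PQ}$ to the pair $(\alpha,\overline\beta)$ with the overlines erased. This cancels every term except those with $\alpha=\overline\beta=\emptyset$, so after summing over all $\widetilde\beta$ the inner signed count collapses to $\sum_{\widetilde\beta\vdash n-mj}(-1)^{\ell(\widetilde\beta)}=p_{e-o}(n-mj)$. Substituting this back yields the right-hand side of \eqref{C4.4}.

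Since each step is a transcription of the corresponding step for $a_{m,r}$, I do not expect a genuine obstacle. The only points deserving a word of care are that the $j=0$ summand vanishes on both sides because of the factor $j$, so the vacuous phrase ``$0$ occurs at least $m$ times'' is harmless, and that every series in sight is finite since $p$ and $p_{e-o}$ vanish at negative arguments.
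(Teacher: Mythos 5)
Your proposal is correct and follows the same route as the paper: the paper's proof of Theorem~\ref{Cor4.4} likewise invokes Theorem~\ref{T1.3}(ii) and Andrews' identity to rewrite the left-hand side as the signed count over $\mathcal P\overline{\mathcal P}(n-mj)$ weighted by $j$, and then reuses the involution $\psi$ from the proof of Theorem~\ref{Cor4.2} to collapse each inner count to $p_{e-o}(n-mj)$. You have merely written out explicitly the steps the paper compresses into ``the same argument as in the proof of Theorem~\ref{Cor4.2}.''
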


\begin{proof}As in the proof of Theorem \ref{Cor4.2}, \begin{align*}s_{m}(n)&+2\sum_{j=1}^\infty (-1)^j s_{m}(n-j^2)\\ & =\sum_{j=0}^\infty j\left(|\{(\alpha, \beta)\in  \mathcal P \overline{\mathcal P}(n-mj) \mid \ell(\beta) \text{ even}\}|\right.\\  &\qquad \qquad\qquad-\left. |\{(\alpha, \beta)\in  \mathcal P \overline{\mathcal P}(n-mj) \mid \ell(\beta) \text{ odd}\}|\right).\end{align*} Then, the same argument  as in the proof of Theorem \ref{Cor4.2} shows combinatorially that \eqref{C4.4} holds.   
\end{proof}

\begin{theorem}{\cite[Corollary 4.6]{M23}} \label{Cor4.6} Let $m,n$, and $r$ be nonnegative integers such that $0\leq r<m$. Then \begin{equation} \label{C4.6} a_{m,r}(n)+2\sum_{j=1}^\infty (-1)^j a_{m,r}(n-2j^2)= \sum_{j=0}^\infty (mj+r)q_{odd}(n-mj-r).\end{equation}\end{theorem}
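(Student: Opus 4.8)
The plan is to follow the same template established in the proofs of Theorems \ref{Cor4.2} and \ref{Cor4.4}, replacing the Andrews involution for $\sum (-1)^n q^{n^2}$ with the analogous combinatorial interpretation of the theta identity $\sum_{n=-\infty}^\infty (-1)^n q^{2n^2}$. Concretely, the left-hand side of \eqref{C4.6} should be realized by combining the generating-function identity from Theorem \ref{T1.3}(i) with a signed enumeration of pairs $(\alpha,\beta)$ where $\alpha\in\mathcal P$ and $\beta$ ranges over overpartitions whose generating function (with sign $(-1)^{\ell(\beta)}$) is $\sum_{n} (-1)^n q^{2n^2}$. The key first step is thus to exhibit, or cite, an involution on the relevant set of overpartitions-into-distinct-or-restricted-parts that proves $\sum_{n=-\infty}^\infty (-1)^n q^{2n^2} = \sum_{n\geq 0} c(n) q^n$ where $c(n)$ counts some natural signed set; the cleanest choice is to use overpartitions of $n$ into even parts, where the overlined parts carry the sign, since $\prod (1+q^{2k})/(1-q^{2k})$-type manipulations or a direct Franklin-style involution on the doubled partition give exactly $\sum (-1)^n q^{2n^2}$.

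After that, I would run the ``fix the nonoverlined parts $\widetilde\beta$'' argument verbatim: write each relevant pair as $(\alpha,\overline\beta,\widetilde\beta)$, apply an involution $\psi$ (an analogue of the one in the proof of Theorem \ref{Cor4.2}, but now moving parts between a partition $\alpha$ and a partition $\overline\beta$ into distinct parts of the appropriate residue/parity class) to cancel all pairs with $\overline\beta\neq\emptyset$ in sign-reversing fashion, and conclude that the signed count collapses to the count of $(\emptyset,\emptyset,\widetilde\beta)$. The surviving objects must then be shown to be counted by $q_{odd}(n-mj-r)$: this is where the ``$2j^2$'' versus ``odd distinct parts'' matching has to be pinned down, using the classical fact (Gauss) that $\sum_{n} (-1)^n q^{2n^2}$ is, up to the Euler-type factor supplied by $\alpha$, the generating function for partitions into distinct odd parts with a sign. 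I would make this precise by noting $\sum_{n=-\infty}^\infty (-1)^n q^{2n^2} = \prod_{k\geq 1}(1-q^{2k})(1-q^{2k-1})^{?}$-style Jacobi triple product specialization and reading off that the complementary factor is $\prod (1+q^{2k-1}) = \sum q_{odd}(n) q^n$ weighted trivially, so that after dividing by $\prod(1-q^k)$ (the $\alpha$ factor) and using Theorem \ref{T1.3}(i) with shift $mj+r$, exactly \eqref{C4.6} falls out.

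The main obstacle I anticipate is getting the combinatorial model for $\sum_{n} (-1)^n q^{2n^2}$ to interface cleanly with the $\psi$-cancellation so that the leftover set is literally $\mathcal Q_{odd}$ and not merely equinumerous to it after an additional bijection; in the $j^2$ case the analogous leftover was all of $\mathcal P$ (via $p_{e-o}$), which is why the argument was so short, whereas here the residual generating function is $\prod(1+q^{2k-1})$, a genuinely restricted class, so I will likely need one extra explicit bijection (e.g. Sylvester's or Glaisher's map between distinct-odd partitions and self-conjugate partitions, or simply reading the Jacobi triple product decomposition correctly) to land exactly on $q_{odd}$. A secondary, more bookkeeping-level issue is tracking the factor of $2$ and the sign conventions so that the truncated sum $2\sum_{j\geq1}(-1)^j a_{m,r}(n-2j^2)$ on the left matches the $n\neq 0$ contributions of the theta series; this is identical in spirit to the $j^2$ case and should go through by the same ``isolate $\widetilde\beta$'' device, but the $2n^2$ exponents mean I must double-check that the involution witnessing $\sum(-1)^n q^{2n^2}$ pairs $+q^{2n^2}$ with nothing (a fixed point at each even square) rather than collapsing further.
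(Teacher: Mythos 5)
Your overall skeleton matches the paper's: double all parts in Andrews' involution so that $\sum_{j}(-1)^j q^{2j^2}$ is interpreted as a signed count (by total length) of overpartitions into even parts, combine this with Theorem \ref{T1.3}(i), and then cancel with a $\psi$-type involution after splitting $\beta$ into its overlined part $\overline\beta$ and nonoverlined part $\widetilde\beta$. But there is a genuine gap at the cancellation/identification step. Since $\overline\beta$ must remain a partition into distinct \emph{even} parts, the involution $\psi$ can only exchange parts between $\overline\beta$ and the even parts $\alpha^e$ of $\alpha$; it cannot touch the odd parts $\alpha^o$. Hence the signed count does not collapse to triples $(\emptyset,\emptyset,\widetilde\beta)$ as you state, but to quadruples $(\alpha^o,\emptyset,\emptyset,\widetilde\beta)$ with $\alpha^o$ a partition into odd parts and $\widetilde\beta$ a partition into even parts, counted with sign $(-1)^{\ell(\widetilde\beta)}$; in other words the residual quantity is $p_e(n-mj-r,2)-p_o(n-mj-r,2)$, the excess of partitions with an even number of even parts over those with an odd number of even parts. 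Identifying this residual signed count with $q_{odd}(n-mj-r)$ is precisely the missing key lemma, and it is a nontrivial classical fact: the paper finishes by invoking Gupta's sign-reversing involution proving $q_{odd}(n)=p_e(n,2)-p_o(n,2)$ combinatorially.

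Your fallback --- reading the complementary factor $\prod_{k\geq 1}(1+q^{2k-1})$ off a Jacobi triple product specialization and dividing generating functions --- would verify \eqref{C4.6}, but it abandons the combinatorial character that is the entire point of the exercise (the identity itself is already known analytically from Merca). So as written the proposal either stops one involution short (you anticipate an ``extra bijection,'' but Sylvester/Glaisher-type maps to self-conjugate partitions are not what is needed) or proves the statement non-combinatorially. The repair is: restrict $\psi$ to the pair $(\alpha^e,\overline\beta)$ while holding $\alpha^o$ and $\widetilde\beta$ fixed, note that the survivors are exactly the pairs $(\alpha^o,\widetilde\beta)$ signed by the parity of the number of even parts, and then cite (or reprove) Gupta's involution to convert that signed set into partitions of $n-mj-r$ into distinct odd parts. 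A small additional correction: in the doubled Andrews interpretation the sign is carried by the total length $\ell(\beta)$ of the overpartition, not by the overlined parts alone, so the bookkeeping for the factor $2$ in $2\sum_{j\geq 1}(-1)^j a_{m,r}(n-2j^2)$ should be set up exactly as in the proof of Theorem \ref{Cor4.2}.
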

\begin{proof}Doubling all parts in Andrews' proof of \eqref{A1} and using Theorem \ref{T1.3}, we have  \begin{align*}a_{m,r}(n)&+2\sum_{j=1}^\infty (-1)^j a_{m,r}(n-2j^2)\\ & =\sum_{j=0}^\infty (mj+r)\left(|\{(\alpha, \beta)\in  \mathcal P \overline{\mathcal P}(n-mj-r) \mid \beta \text{ has even parts, } \ell(\beta) \text{ even}\}|\right.\\  &\qquad \qquad\ \ \ \  -\left. |\{(\alpha, \beta)\in  \mathcal P \overline{\mathcal P}(n-mj-r) \mid \beta \text{ has even parts, } \ell(\beta)  \text{ odd}\}|\right).\end{align*} 

In \cite{G75}, Gupta constructed an involution and gave a combinatorial proof for $$q_{odd}(n)=p_e(n,2)-p_o(n,2),$$ where $p_e(n,2)$ (respectively $p_o(n,2)$) is the number of partitions of $n$ with and even (respectively odd) number of even parts. 

Doubling parts in the argument of the proof of Theorem \ref{Cor4.2}, we see that for $n\geq 1$ \begin{align*}|\{(\alpha, \eta) \in \mathcal{PQ}(n)& \mid  \alpha, \eta \text{ have even parts, }\ell(\eta) \text{ even}\}|\\ & -|\{(\alpha, \eta) \in \mathcal{PQ}(n)\mid \alpha, \eta \text{ have even parts, } \ell(\eta) \text{ odd}\}|=0,\end{align*} and if $n=0$ the difference is $1$ since $\{(\alpha, \eta) \in \mathcal{PQ}(0)\mid \alpha, \eta \text{ have even parts}\}= \{(\emptyset, \emptyset)\}.$

Next, we write $(\alpha, \beta)\in \mathcal P \overline{\mathcal P}(n)$ where $\beta$ has only even parts as $(\alpha^o, \alpha^e, \overline \beta, \widetilde \beta)$, where $\alpha^e$ (respectively $\alpha^o$) is the partition consisting of the even (respectively odd) parts of $\alpha$. Fix $m, n, j, r $ nonnegative integers  such that $0\leq r<m$. 
Furthermore fix a partition $\widetilde \beta$ with even parts and a partition $\alpha^o$ with odd parts.  The involution $\psi$ of Theorem \ref{Cor4.2} is well defined when restricted to pairs of partitions with even parts in $\mathcal{PQ}(n)$.
Using the involution $\psi$ on $(\alpha^e, \overline \beta)$, we obtain \begin{align*}|\{(\alpha^o, \alpha^e, \overline \beta,\widetilde \beta) & \in \mathcal P \overline{\mathcal P}_{\widetilde \beta}(n-mj-r)\mid \ell(\overline \beta) \text{ even}\}|\\&  -|\{(\alpha^o, \alpha^e, \overline \beta,\widetilde \beta) \in \mathcal P \overline{\mathcal P}_{\widetilde \beta}(n-mj-r)\mid \ell(\overline \beta) \text{ odd}\}|\\ & = |\{( \alpha^o,\emptyset, \emptyset,\widetilde \beta) \in \mathcal P \overline{\mathcal P}(n-mj-r)\}|=1.\end{align*}

Summing after all $\alpha^o, \widetilde \beta$, we obtain
\begin{align*}& |\{(\alpha, \beta) \in  \mathcal P \overline{\mathcal P}(n-mj-r) \mid \beta \text{ has even parts, } \ell(\beta) \text{ even}\}|\\   & \qquad-  |\{(\alpha, \beta)\in  \mathcal P \overline{\mathcal P}(n-mj-r) \mid \beta \text{ has even parts, } \ell(\beta)  \text{ odd}\}|\\ \hspace*{-1cm} & = |\{(\alpha^o, \widetilde\beta)\vdash n-mj-r \mid \alpha^o\text{ has odd parts, } \widetilde\beta \text{ has even parts, } \ell(\widetilde\beta) \text{ even}\}|\\  & \qquad-   |\{(\alpha^o, \widetilde\beta)\vdash n-mj-r \mid \alpha^o\text{ has odd parts, } \widetilde\beta \text{ has even parts, } \ell(\widetilde\beta) \text{ odd}\}|\\ & = q_{odd}(n-mj-r).
\end{align*} The last equality above follows from Gupta's involution. 
\end{proof}

\begin{theorem}{\cite[Corollary 4.7(i)]{M23}} Let $m,n$ be nonnegative integers. Then \begin{equation*} \label{C4.7} s_{m}(n)+2\sum_{j=1}^\infty (-1)^j s_{m}(n-2j^2)= \sum_{j=0}^\infty jq_{odd}(n-mj-r).\end{equation*}\end{theorem}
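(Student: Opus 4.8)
The plan is to follow verbatim the structure of the proof of Theorem~\ref{Cor4.6}, replacing part~(i) of Theorem~\ref{T1.3} by part~(ii) throughout; this is precisely the relation that the proof of Theorem~\ref{Cor4.4} bears to that of Theorem~\ref{Cor4.2}. So the first step is to double all parts in Andrews' involution for \eqref{A1} and combine it with the combinatorial proof of Theorem~\ref{T1.3}(ii). Since removing $m$ copies of the part $j$ is a bijection from $\mathcal S_{j,m}$ onto $\mathcal P$, and the doubled Andrews involution acting on the overlined (even-parted) partition governs the alternating sum over the $2j^2$, this gives
\begin{align*}
s_m(n)&+2\sum_{j=1}^\infty (-1)^j s_m(n-2j^2)\\
&=\sum_{j=0}^\infty j\left(|\{(\alpha,\beta)\in\mathcal P \overline{\mathcal P}(n-mj)\mid \beta\text{ has even parts, }\ell(\beta)\text{ even}\}|\right.\\
&\qquad\qquad-\left.|\{(\alpha,\beta)\in\mathcal P \overline{\mathcal P}(n-mj)\mid \beta\text{ has even parts, }\ell(\beta)\text{ odd}\}|\right).
\end{align*}

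Next I would run the involution argument from the proof of Theorem~\ref{Cor4.6} with $n-mj-r$ replaced by $n-mj$: write a pair $(\alpha,\beta)\in\mathcal P \overline{\mathcal P}(n-mj)$ with $\beta$ having only even parts as $(\alpha^o,\alpha^e,\overline\beta,\widetilde\beta)$, fix the odd-parted partition $\alpha^o$ and the even-parted partition $\widetilde\beta$, and apply the involution $\psi$ (restricted to pairs of even-parted partitions) to $(\alpha^e,\overline\beta)$. Because $\psi$ reverses the parity of $\ell(\overline\beta)$ and, on even-parted pairs, has no fixed point when the total is positive and exactly the fixed point $(\emptyset,\emptyset)$ when the total is $0$, the signed count over $\ell(\overline\beta)$ collapses to the single configuration $(\alpha^o,\emptyset,\emptyset,\widetilde\beta)$. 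Summing over all $\alpha^o$ and $\widetilde\beta$, the inner signed difference becomes the difference between the number of pairs $(\alpha^o,\widetilde\beta)\vdash n-mj$ with $\alpha^o$ having odd parts, $\widetilde\beta$ having even parts and $\ell(\widetilde\beta)$ even, and the same count with $\ell(\widetilde\beta)$ odd. Recombining $\alpha^o$ and $\widetilde\beta$ into a single partition (splitting a partition into its odd and its even parts is a bijection), this difference equals $p_e(n-mj,2)-p_o(n-mj,2)$, where $p_e(\cdot,2)$, resp.\ $p_o(\cdot,2)$, counts partitions with an even, resp.\ odd, number of even parts; by Gupta's involution this is $q_{odd}(n-mj)$. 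Hence the right-hand side is $\sum_{j=0}^\infty j\,q_{odd}(n-mj)$, which is the asserted identity (note that the left-hand side does not involve $r$, so the ``$-r$'' in the displayed statement should be dropped; it is an artifact of the corresponding formula for $a_{m,r}$ in Theorem~\ref{Cor4.6}).

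I do not expect a substantive obstacle here: every ingredient --- Theorem~\ref{T1.3}(ii), the doubled Andrews involution, the involution $\psi$ restricted to even-parted partitions, and Gupta's involution --- has already been set up in the preceding proofs, so the task is essentially bookkeeping. The one point requiring care, exactly as in Theorem~\ref{Cor4.6}, is to hold $\alpha^o$ and $\widetilde\beta$ fixed before applying $\psi$, so that $\psi$ acts only on the even-parted pair $(\alpha^e,\overline\beta)$ and its fixed points are correctly identified; once that is arranged the chain of equalities is immediate.
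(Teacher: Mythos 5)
Your proposal is correct and follows exactly the route the paper takes: the paper's proof of this result simply says it is identical to that of Theorem~\ref{Cor4.6} after interpreting the left-hand side as in Theorem~\ref{Cor4.4} (i.e., via Theorem~\ref{T1.3}(ii)) with the parts of the overpartitions doubled, which is precisely the argument you spell out. Your remark that the ``$-r$'' on the right-hand side is an artifact and the summand should read $jq_{odd}(n-mj)$ is also correct.
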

\begin{proof}The proof is identical to the proof of Theorem \ref{Cor4.6} after interpreting the lefthand site as in Theorem \ref{Cor4.4} with the parts of overpartitions doubled. 
\end{proof}

\begin{theorem}{\cite[Corollary 4.9]{M23}} \label{Cor4.9} Let $m,n$, and $r$ be nonnegative integers such that $0\leq r<m$. Then \begin{equation} \label{C4.9} \sum_{j=1}^\infty (-1)^{j(j+1)/2} a_{m,r}(n-j(j+1)/2)= \sum_{j=0}^\infty (mj+r)q\left(\frac{n-mj-r}{2}\right).\end{equation}\end{theorem}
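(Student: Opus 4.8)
The plan is to follow exactly the template established in Theorems \ref{Cor4.2} and \ref{Cor4.6}, replacing Andrews' pentagonal-type involution for $\sum (-1)^n q^{n^2}$ with the analogous combinatorial proof of the identity
$$\sum_{j=0}^\infty (-1)^{j(j+1)/2} q^{j(j+1)/2} = \sum_{n\geq 0}\overline{p}_{e-o}^{\,\mathrm{dist}}(n)\,q^n$$
for triangular numbers, where now the relevant overpartitions are those into \emph{distinct} nonnegative parts (equivalently, Jacobi-triple-product style). Concretely, first I would invoke Theorem \ref{T1.3}(i) to write $a_{m,r}(n-j(j+1)/2)=\sum_{k}(mk+r)p(n-mk-r-j(j+1)/2)$, so that the left-hand side of \eqref{C4.9} becomes $\sum_{k=0}^\infty (mk+r)$ times a signed count of pairs $(\alpha,\beta)$ where $\alpha\in\mathcal P$ and $\beta$ ranges over partitions into distinct parts, weighted by $(-1)^{\ell(\beta)}$ (the triangular numbers $j(j+1)/2$ being exactly the sizes $1+2+\cdots+j$ of the staircase, i.e. the smallest partition into $j$ distinct parts). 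The sign $(-1)^{j(j+1)/2}$ matches $(-1)^j=(-1)^{\ell(\beta)}$ modulo the usual bookkeeping, which I would check carefully at the outset since $j(j+1)/2$ and $j$ have the same parity iff $j\equiv 0,3\pmod 4$ — this is the one place the argument differs from the $n^2$ case and where I expect the main friction.

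Next, fixing $m,n,k,r$ and peeling off a partition factor as before, the task reduces to evaluating the signed sum $\sum_{(\alpha,\beta)} (-1)^{\ell(\beta)}$ over pairs with $\alpha\in\mathcal P$ and $\beta$ into distinct parts, $(\alpha,\beta)\vdash N$ with $N=n-mk-r$. I would show this equals $q(N/2)$ by constructing (or citing from the literature) an involution-plus-fixed-point decomposition: the fixed points should be pairs $(\alpha,\beta)$ with $\beta=\emptyset$ and $\alpha$ a partition in which every part has even multiplicity, i.e. $\alpha=2\gamma$ doubled for $\gamma\vdash N/2$ arbitrary — giving exactly $q(N/2)$ once one realizes that "$\alpha$ with all multiplicities even" is equinumerous with "partitions of $N/2$" via halving multiplicities, and then $q$ enters because $N/2$ itself... wait — I should instead get partitions of $N/2$ into \emph{distinct} parts. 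So the correct fixed-point set must be pairs where $\beta$ is a staircase $(j,j-1,\ldots,1)$ absorbed into $\alpha$, and the surviving objects biject with $\mathcal Q(N/2)$; I would use the standard "Franklin-type" or Sylvester-type involution on the non-staircase part to cancel everything else. Getting this fixed-point set to be precisely $\mathcal Q(\lfloor\cdot\rfloor)$ with the floor/half behaving correctly (and vanishing when $N$ is odd, consistent with $q$ of a non-integer being $0$) is the crux.

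Finally, I would assemble the pieces exactly as in the proof of Theorem \ref{Cor4.6}: sum over the fixed auxiliary partition, then sum over $k$, pulling out the weight $mk+r$, to recover the right-hand side $\sum_{k=0}^\infty (mk+r)\,q((n-mk-r)/2)$. The routine parts — the peeling lemma, the reindexing, the $p(x)=0$ and $q(x)=0$ conventions for non-integers — I would treat briskly by reference to the earlier proofs. The genuine obstacle is identifying the right combinatorial identity playing the role of \eqref{A1} for triangular numbers and pinning down its involution so that the fixed points are counted by $q((n-mk-r)/2)$; I expect this to be a known Jacobi-triple-product companion (due to Sylvester or appearing in Andrews–Eriksson), and once located the rest is bookkeeping parallel to Corollaries 4.2 and 4.6.
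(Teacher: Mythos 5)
Your high-level plan (reduce via Theorem \ref{T1.3}, then find a triangular-number analogue of \eqref{A1} whose involution leaves fixed points counted by $q((n-mk-r)/2)$) is indeed the paper's strategy, but the proposal stops precisely where the real content lies, and the one concrete reduction you do set up is wrong. You propose to encode the left side of \eqref{C4.9} as a signed count of pairs $(\alpha,\beta)$ with $\alpha\in\mathcal P$, $\beta$ into distinct parts, weighted by $(-1)^{\ell(\beta)}$. This fails twice. First, the sign in \eqref{C4.9} is $(-1)^{j(j+1)/2}$, the parity of the subtracted triangular number, not $(-1)^j=(-1)^{\ell(\beta)}$; the two parities agree only for $j\equiv 0,1 \pmod 4$ (not $j\equiv 0,3$ as you state), and this discrepancy is not ``bookkeeping'' that can be absorbed. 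Second, even if the signs matched, the quantity $\sum_{(\alpha,\beta)\vdash N}(-1)^{\ell(\beta)}$ over $\alpha\in\mathcal P$, $\beta\in\mathcal Q$ is annihilated by exactly the involution $\psi$ used in the proof of Theorem \ref{Cor4.2}: it equals $1$ if $N=0$ and $0$ otherwise, never $q(N/2)$. So the weighted set you chose cannot produce the right-hand side, and your mid-proof correction (``the fixed points should be staircases absorbed into $\alpha$, cancel the rest by a Franklin/Sylvester-type involution'') is a hope, not a construction; you neither build that involution nor cite a correct source, and you acknowledge that locating the needed identity is the open obstacle.

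For comparison, the paper's proof supplies exactly the two inputs you are missing. The triangular-number analogue of \eqref{A1} is Andrews' arithmetically proved identity $\sum_{j}(-1)^{j(j+1)/2}q^{j(j+1)/2}=\sum_{n\geq 0}ped_{e-o}(n)q^n$ from \cite{A72}, where the relevant objects are partitions with distinct even parts and unrestricted odd parts, signed by the total number of parts (not partitions into distinct parts signed by length). This turns the left side of \eqref{C4.9} into a signed count of pairs $(\lambda,\mu)$ with $\mu\in\mathcal{PED}$; splitting each of $\lambda,\mu$ into even and odd parts and applying $\psi$ to $(\lambda^e,\mu^e)$ reduces the count to pairs of odd-part partitions signed by the parity of $\ell(\mu^o)$, and that signed count is identified as $q_{even}(n-mj-r)$ by a second nontrivial input, \cite[Proposition 4]{BW}; halving the parts then gives $q\left(\frac{n-mj-r}{2}\right)$. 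Without substitutes for these two steps your argument has a genuine gap at its crux.
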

\begin{proof}In \cite{A72},  Andrews constructed an involutions and proved  combinatorially that $$\sum_{n=-\infty}^\infty (-1)^{j(j+1)/2}q^{j(j+1)/2}=\sum_{n=0}^\infty ped_{e-o}(n)q^n,$$ where $ped_{e-o}(n):=ped_e(n)-ped_o(n)$ and $ped_e(n)$ (respectively $ped_o(n)$) is the number of partitions of $n$ with even parts distinct and odd parts unrestricted and an even (respectively odd) total number of parts. Denote by $\mathcal {PED}(n)$ the set of partitions of $n$ with even parts distinct and odd parts unrestricted and let $$\mathcal{PPED}(n):=\{(\lambda, \mu)\vdash n\mid \lambda\in \mathcal P, \mu \in \mathcal{PED}\}.$$ 
Then, Andrews' proof together with the combinatorial proof of Theorem \ref{T1.3} shows combinatorially that 
\begin{align*}\sum_{j=1}^\infty & (-1)^{j(j+1)/2} a_{m,r}(n-j(j+1)/2)\\ & =\sum_{j=0}^\infty (mj+r)\left(|\{(\lambda, \mu)\in  \mathcal{PPED}(n-mj-r) \mid \ell(\mu) \text{ even}\}|\right.\\  &\qquad \qquad\qquad-\left. |\{(\lambda, \mu)\in  \mathcal{PPED}(n-mj-r) \mid \ell(\mu) \text{ odd}\}|\right).
\end{align*} 
We write  $(\lambda, \mu)\in \mathcal{PPED}(n)$ as $(\lambda^e, \lambda^o, \mu^e, \mu^o)$, where $\lambda^e$ (respectively $\lambda^o$) is the partition consisting of the even (respectively odd) parts of $\lambda$; and $\mu^e$, $\mu^o$ are defined similarly. Note that $\mu^e$ is a partition with distinct even parts. 

Fix $m, n, j, r$ nonnegative integers  such that $0\leq r<m$. 
Furthermore fix $\lambda^o$ and $\mu^o$ two partitions with odd parts. Define $$\mathcal{PPED}_{\lambda^o, \mu^o}(n-mj-r):=\{(\lambda, \mu)=(\lambda^e, \lambda^o, \mu^e, \mu^o)\in \mathcal{PPED}(n-mj-r)\}.$$
Using the involution $\psi$ of the proof of Theorem \ref{Cor4.2} on $(\lambda^e, \mu^e)$, we obtain \begin{align*}|\{(\lambda^e, \lambda^o,  \mu^e, \mu^o) & \in\mathcal{PPED}_{\lambda^o,\mu^o}(n-mj-r)\mid \ell(\mu^e) \text{ even}\}|\\&  -|\{(\lambda^e, \lambda^o,  \mu^e, \mu^o)  \in\mathcal{PPED}_{\lambda^o, \mu^o}(n-mj-r)\mid \ell(\mu^e) \text{ odd}\}|\\ & = |\{( \emptyset,\lambda^o, \emptyset,\mu^o) \in \mathcal{PPED}_{\lambda^o, \mu^o}(n-mj-r)\}|=1.\end{align*}
Summing after all $\lambda^o,\mu^o$, we obtain \begin{align*}|\{(\lambda, \mu)& \in  \mathcal{PPED}(n-mj-r) \mid \ell(\mu) \text{ even}\}|\\  &\qquad \qquad\qquad- |\{(\lambda, \mu)\in  \mathcal{PPED}(n-mj-r) \mid \ell(\mu) \text{ odd}\}|\\ & |\{(\lambda^o, \mu^o)\vdash n-mj-r \mid \lambda^o, \mu^o \text{ have odd parts, } \ell(\mu^o) \text{ even}\}|\\ & \qquad \qquad\qquad- |\{(\lambda^o, \mu^o)\vdash n-mj-r \mid \lambda^o, \mu^o \text{ have odd parts, } \ell(\mu^o) \text{ odd}\}|
\end{align*}

In \cite[Proposition 4]{BW}, Ballantine and Welch proved that \begin{align*}|\{(\alpha, \beta)\vdash n &  \mid \alpha, \beta \text{ have odd parts, } \ell(\beta) \text{ even}\}|\\ & -|\{(\alpha, \beta)\vdash n \mid \alpha, \beta \text{ have odd parts, } \ell(\beta) \text{ odd}\}| \\ & = q_{even}(n), \end{align*} where $q_{even}(n)$ is the number of distinct partitions with even parts.

Thus, the left hand side of \eqref{C4.9} equals $$\sum_{j=0}^\infty (mj+r)q_{even}(n-mj-r).$$ Finally, the transformation that maps a partition $\lambda\vdash n$ with distinct even parts to the partition $\mu\vdash n/2$ with $\mu_i=\lambda_i/2$ for all $1\leq i\leq \ell(\lambda)$ is a bijection and shows that $q_{even}(n)=q(n/2)$. This completes the proof. \end{proof}

\begin{theorem}{\cite[Corollary 4.10]{M23}} \label{Cor4.10} Let $m, n$ be nonnegative integers. Then \begin{equation} \label{C4.10} \sum_{j=1}^\infty (-1)^{j(j+1)/2} s_{m}(n-j(j+1)/2)= \sum_{j=0}^\infty jq\left(\frac{n-mj}{2}\right).\end{equation}
\end{theorem}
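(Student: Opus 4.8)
The plan is to mirror the passage from Theorem~\ref{Cor4.9} to Theorem~\ref{Cor4.10} exactly as the passage from Theorem~\ref{Cor4.6} to Theorem~\ref{Cor4.7(i)}, namely by replacing the interpretation of the left-hand side coming from Theorem~\ref{T1.3}(i) with the one coming from Theorem~\ref{T1.3}(ii). Concretely, recall from the proof of Theorem~\ref{T1.3}(ii) that $s_m(n)=\sum_{j=0}^\infty j\,|\mathcal S_{j,m}(n)|$ and that deleting $m$ copies of the part $j$ is a bijection $\mathcal S_{j,m}(n)\to\mathcal P(n-mj)$. Combining this with Andrews' involution for $\sum_{n}(-1)^{j(j+1)/2}q^{j(j+1)/2}=\sum_n ped_{e-o}(n)q^n$ used in Theorem~\ref{Cor4.9}, I would first show combinatorially that
\begin{align*}
\sum_{j=1}^\infty & (-1)^{j(j+1)/2} s_{m}(n-j(j+1)/2)\\ & =\sum_{j=0}^\infty j\left(|\{(\lambda, \mu)\in  \mathcal{PPED}(n-mj) \mid \ell(\mu) \text{ even}\}|\right.\\  &\qquad \qquad\qquad-\left. |\{(\lambda, \mu)\in  \mathcal{PPED}(n-mj) \mid \ell(\mu) \text{ odd}\}|\right).
\end{align*}

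Next, for each fixed $j$ I would apply verbatim the reduction already carried out in the proof of Theorem~\ref{Cor4.9}: split $(\lambda,\mu)\in\mathcal{PPED}(n-mj)$ as $(\lambda^e,\lambda^o,\mu^e,\mu^o)$, fix $\lambda^o$ and $\mu^o$, run the involution $\psi$ of Theorem~\ref{Cor4.2} on $(\lambda^e,\mu^e)$, and sum over $\lambda^o,\mu^o$ to collapse the signed count down to
\[
|\{(\lambda^o, \mu^o)\vdash n-mj \mid \lambda^o, \mu^o \text{ have odd parts, } \ell(\mu^o) \text{ even}\}|-|\{(\lambda^o, \mu^o)\vdash n-mj \mid \lambda^o, \mu^o \text{ have odd parts, } \ell(\mu^o) \text{ odd}\}|.
\]
By \cite[Proposition 4]{BW} this difference equals $q_{even}(n-mj)$, and by the halving bijection $q_{even}(N)=q(N/2)$, so the right-hand side becomes $\sum_{j=0}^\infty j\,q\big(\tfrac{n-mj}{2}\big)$, which is exactly \eqref{C4.10}.

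Since every ingredient — the deletion bijection from Theorem~\ref{T1.3}(ii), Andrews' $ped$ involution, the involution $\psi$, Proposition~4 of \cite{BW}, and the halving map — has already been established and used in the immediately preceding proofs, there is essentially no new obstacle here: the only thing to be careful about is bookkeeping, namely that in part (ii) the shift is $n-mj$ rather than $n-mj-r$ and the weight is $j$ rather than $mj+r$, and that the $j=0$ term contributes nothing to either side (on the left because $s_m(0)$-type contributions vanish after weighting by $j$, on the right because of the factor $j$). Accordingly I would simply write the proof in the telescoped style used for Theorem~\ref{Cor4.7(i)}: state that the left-hand side is interpreted as in Theorem~\ref{Cor4.4}, i.e.\ with the role of $a_{m,r}$ replaced by $s_m$ via Theorem~\ref{T1.3}(ii), and then note that the remainder of the argument is identical to that of Theorem~\ref{Cor4.9}.

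\begin{proof}
The proof is identical to the proof of Theorem~\ref{Cor4.9} after interpreting the left-hand side as in Theorem~\ref{Cor4.4}, i.e., using the bijection of Theorem~\ref{T1.3}(ii) that removes $m$ parts equal to $j$ (so that $s_m(n)=\sum_{j=0}^\infty j|\mathcal S_{j,m}(n)|$ with $\mathcal S_{j,m}(n)\cong\mathcal P(n-mj)$) in place of the bijection of Theorem~\ref{T1.3}(i). Thus, combining this with Andrews' $ped$ involution from \cite{A72}, we obtain
\begin{align*}
\sum_{j=1}^\infty & (-1)^{j(j+1)/2} s_{m}(n-j(j+1)/2)\\ & =\sum_{j=0}^\infty j\left(|\{(\lambda, \mu)\in  \mathcal{PPED}(n-mj) \mid \ell(\mu) \text{ even}\}|\right.\\  &\qquad \qquad\qquad-\left. |\{(\lambda, \mu)\in  \mathcal{PPED}(n-mj) \mid \ell(\mu) \text{ odd}\}|\right).
\end{align*}
Writing $(\lambda,\mu)\in\mathcal{PPED}(n-mj)$ as $(\lambda^e,\lambda^o,\mu^e,\mu^o)$, fixing $\lambda^o,\mu^o$, applying the involution $\psi$ of the proof of Theorem~\ref{Cor4.2} to $(\lambda^e,\mu^e)$, and summing over all $\lambda^o,\mu^o$ exactly as in the proof of Theorem~\ref{Cor4.9}, the signed count equals
\begin{align*}
|\{(\lambda^o, \mu^o)\vdash n-mj & \mid \lambda^o, \mu^o \text{ have odd parts, } \ell(\mu^o) \text{ even}\}|\\ & -|\{(\lambda^o, \mu^o)\vdash n-mj \mid \lambda^o, \mu^o \text{ have odd parts, } \ell(\mu^o) \text{ odd}\}|,
\end{align*}
which by \cite[Proposition 4]{BW} equals $q_{even}(n-mj)$. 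Finally, the halving bijection from the proof of Theorem~\ref{Cor4.9} gives $q_{even}(n-mj)=q\big(\tfrac{n-mj}{2}\big)$, so the right-hand side equals $\sum_{j=0}^\infty jq\big(\tfrac{n-mj}{2}\big)$. This proves \eqref{C4.10}.
\end{proof}
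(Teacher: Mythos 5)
Your proposal is correct and follows exactly the route the paper intends: the paper's proof of Theorem~\ref{Cor4.10} is precisely ``identical to that of Theorem~\ref{Cor4.9} after the suitable interpretation of the left hand side,'' i.e., using Theorem~\ref{T1.3}(ii) in place of Theorem~\ref{T1.3}(i). You have simply written out in full the same chain of ingredients (Andrews' $ped$ involution, the involution $\psi$, \cite[Proposition 4]{BW}, and the halving bijection) that the paper invokes implicitly.
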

\begin{proof}The proof is identical to that of Theorem \ref{Cor4.9} after the suitable interpretation of the left hand side.
\end{proof}
We remark a slight error in  \cite[Corollary 2.13]{MS} whose statement is that same as \eqref{C4.10} if $m\mid n$. However,  in \cite[Corollary 2.13]{MS},  the righthand side of \eqref{C4.10} is set to $0$ if $m\nmid n$. For example,  it is easily verified that for $n=10$ and $m=3$ the lefthand side of \eqref{C4.10} equals $2$. 
\smallskip 

The rank $r(\lambda)$ of a partition $\lambda$ is defined \cite{D} as the largest part of $\lambda$ minus the number of parts in $\lambda$. Thus, $r(\lambda)=\lambda_1-\ell(\lambda)$. Let \begin{align*}\mathcal N(n)&:=\{\lambda \vdash n \mid r(\lambda)\geq 0\}\},\\ \mathcal R(n)&:=\{\lambda \vdash n \mid r(\lambda)> 0\}\},
\end{align*} and define $N(n)=|\mathcal N(n)|$ and $R(n)=|\mathcal R(n)|$.
\begin{theorem}{\cite[Corollary 5.2]{M23}} \label{Cor5.2} Let $m,n$, and $r$ be nonnegative integers such that $0\leq r<m$. Then 
\begin{itemize}\item[(i)] $\displaystyle \sum_{j=0}^\infty (-1)^{j} a_{m,r}(n-j(3j+1)/2)= \sum_{j=0}^\infty (mj+r)N(n-mj-r)$
\item[(ii)] $\displaystyle \sum_{j=1}^\infty (-1)^{j+1} a_{m,r}(n-j(3j+1)/2)= \sum_{j=0}^\infty (mj+r)R(n-mj-r)$.
\end{itemize}
\end{theorem}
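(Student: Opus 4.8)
The plan is to peel off the weights $mj+r$ using Theorem~\ref{T1.3}(i) and reduce everything to a single identity expressing Euler's pentagonal number theorem in terms of Dyson's rank. Substituting $a_{m,r}(N)=\sum_{k\ge 0}(mk+r)p(N-mk-r)$ into the left-hand side of (i) and interchanging the (finite, for fixed $n$) summations, part~(i) becomes
\[\sum_{k\ge 0}(mk+r)\sum_{j\ge 0}(-1)^{j}\,p\!\left(n-mk-r-\tfrac{j(3j+1)}{2}\right)=\sum_{k\ge 0}(mk+r)\,N(n-mk-r),\]
so it is enough to prove, for every $M\ge 0$, the identity
\[N(M)=\sum_{j\ge 0}(-1)^{j}\,p\!\left(M-\tfrac{j(3j+1)}{2}\right);\]
call it $(\ast)$. (Equivalently, one can keep the language of the proof of Theorem~\ref{T1.3}: the left side of (i) is a signed count of pairs consisting of an overpartition with a single overlined part equal to $mk+r$ together with a ``staircase'' recording the pentagonal index, and deleting the overlined part turns this into $(\ast)$.) The identity $(\ast)$ is the ``nonnegative half'' of Euler's pentagonal number theorem, and the assertion that this half equals $\sum_M N(M)q^M\cdot(q;q)_\infty$ --- that is, that the alternating sum of partition numbers over the pentagonal numbers $j(3j+1)/2$, $j\ge0$, counts partitions of nonnegative rank --- is classical.

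For a combinatorial proof of $(\ast)$, I would interpret $p(M-j(3j+1)/2)$ as the number of pairs $(\mu,\sigma_{j})$ with $\mu$ an arbitrary partition and $\sigma_{j}=(2j,2j-1,\dots,j+1)$ the staircase of length $j$ (so $|\sigma_j|=j(3j+1)/2$ and $\sigma_0=\emptyset$), subject to $|\mu|+|\sigma_j|=M$, each pair carrying the sign $(-1)^{j}=(-1)^{\ell(\sigma_j)}$. The goal is a sign-reversing involution $\Phi$ on this weighted set whose fixed points are precisely the pairs $(\nu,\emptyset)$ with $r(\nu)\ge 0$; summing signs then yields $(\ast)$. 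The move defining $\Phi$ is of Franklin type: one compares the smallest part $j+1$ of $\sigma_j$ --- and, in the case $j=0$, the quantity $r(\mu)$ --- with a prescribed part (or hook) of $\mu$, and either transfers the bottom row of $\sigma_j$ into $\mu$, passing to $\sigma_{j-1}$, or removes a suitable row or hook of $\mu$ to lengthen $\sigma_j$ to $\sigma_{j+1}$; the exceptional configurations, where $\mu$ and $\sigma_j$ ``interlock,'' must be arranged so that they are exactly the partitions of $M$ of nonnegative rank (paired with $\emptyset$). I expect the design and verification of $\Phi$ --- getting the case distinctions right so that $\Phi$ is well defined, an involution, and has the stated fixed-point set --- to be the main obstacle; this is the bijective heart of the classical link between the pentagonal number theorem and the rank, and I would adapt the known involution from the literature (or work out $\Phi$ explicitly) rather than reprove it from scratch.

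Part~(ii) then follows from~(i) with no further involution. Conjugation is an involution on $\mathcal P(M)$ sending the rank $r(\nu)$ to $-r(\nu)$, so $|\{\nu\vdash M: r(\nu)<0\}|=|\{\nu\vdash M: r(\nu)>0\}|=R(M)$, whence $p(M)=N(M)+R(M)$. Adding and subtracting the $j=0$ term and then using~(i) together with Theorem~\ref{T1.3}(i),
\begin{align*}\sum_{j\ge 1}(-1)^{j+1}a_{m,r}\!\left(n-\tfrac{j(3j+1)}{2}\right)&=a_{m,r}(n)-\sum_{j\ge 0}(-1)^{j}a_{m,r}\!\left(n-\tfrac{j(3j+1)}{2}\right)\\&=\sum_{k\ge 0}(mk+r)\bigl(p(n-mk-r)-N(n-mk-r)\bigr)\\&=\sum_{k\ge 0}(mk+r)\,R(n-mk-r),\end{align*}
which is the claim of~(ii).
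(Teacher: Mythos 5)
Your proposal follows essentially the same route as the paper: peel off the weights $mk+r$ via Theorem~\ref{T1.3}(i), reduce to the single classical identity $(\ast)$ linking the alternating pentagonal sum of $p$ to Dyson's rank, and pass between (i) and (ii) by conjugation, which is exactly the paper's equivalence of (i$^*$) and (ii$^*$) (the paper proves the positive-rank statement directly and deduces the nonnegative-rank one, i.e.\ your derivation run in the opposite direction). The one piece you leave as a black box --- the sign-reversing involution establishing $(\ast)$ --- is precisely what the paper supplies by quoting Bressoud and Zeilberger's explicit involution $\varphi_{BZ}$ on $\bigcup_{j}\mathcal P\left(n-j(3j+1)/2\right)$, whose restriction leaves unmatched exactly the partitions in $\mathcal R(n)$; so your plan to ``adapt the known involution from the literature'' is sound, but naming and stating that involution (rather than sketching a Franklin-type move to be designed later) is the substantive content of the combinatorial proof and should be written out or cited explicitly.
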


\begin{proof}Using Theorem \ref{T1.3} shows that it is enough to prove combinatorially that for each $k\geq 0$ we have 

\begin{itemize}\item[(i$^*$)] $\displaystyle \sum_{j=0}^\infty (-1)^{j} p(n-mk-r-j(3j+1)/2)= N(n-mk-r)$
\item[(ii$^*$)] $\displaystyle \sum_{j=1}^\infty (-1)^{j+1} p(n-mk-r-j(3j+1)/2)= R(n-mk-r)$.
\end{itemize}
Since $p(n-mk-r)-N(n-mk-r)=|\{\lambda\vdash n \mid r(\lambda)<0\}$ and, by conjugation, the number of partitions of $n-mk-r$ with negative rank equals the number of partitions of $n-mk-r$ with positive rank, statements (i$^*$) and (ii$^*$) are equivalent. 

For $j \in \mathbb Z$, set $a(j):=j(3j+1)/2$.  In \cite{BZ85}, Bressoud and Zeilberger constructed an involution $$\varphi_{BZ}: \bigcup_{j\in 2\mathbb Z}\mathcal P\left(n-a(j)\right)\to  \bigcup_{j\in 2\mathbb Z+1}\mathcal P\left(n-a(j)\right)$$ as follows. Let $\lambda \in \mathcal P(n-a(j))$ and define\ $\varphi_{BZ}(\lambda)$ to be 
\begin{align*}(\ell(\lambda)+3j-1, \lambda_1-1, \ldots, \lambda_{\ell(\lambda)}-1)\in \mathcal P(n-a(j-1)) & \text{ if } \ell(\lambda)+3j\geq \lambda_1, \\ \ \\  (\lambda_2+1,\ldots, \lambda_{\ell(\lambda)}+1, 1^{\lambda_1-3j-\ell(\lambda) -1})\in \mathcal P(n-a(j+1))  & \text{ if } \ell(\lambda)+3j< \lambda_1,\end{align*} where $1^i$ means that there  are $i$ parts equal to $1$ in the partition. 
 Restricting $\varphi_{BZ}$ we obtain an involution $$\varphi_{BZ}: \mathcal R(n)\, \cup \bigcup_{j\geq 2 \text{ even}}\mathcal P\left(n-a(j)\right)\to  \bigcup_{j\geq 1 \text{ even}}\mathcal P\left(n-a(j)\right).$$ This completes the combinatorial proof of the theorem. 
 \end{proof}

\begin{theorem}{\cite[Corollary 5.3]{M23}} \label{Cor5.3} Let $m,n$  be nonnegative integers. Then 
\begin{itemize}\item[(i)] $\displaystyle \sum_{j=0}^\infty (-1)^{j} s_{m}(n-j(3j+1)/2)= \sum_{j=0}^\infty jN(n-mj)$
\item[(ii)] $\displaystyle \sum_{j=1}^\infty (-1)^{j+1} s_{m}(n-j(3j+1)/2)= \sum_{j=0}^\infty jR(n-mj)$.
\end{itemize}
\end{theorem}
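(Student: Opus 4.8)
The strategy mirrors exactly the passage from Theorem~\ref{Cor4.9} to Theorem~\ref{Cor4.10}, and from Theorem~\ref{Cor4.2} to Theorem~\ref{Cor4.4}: we replace the weight $mj+r$ coming from the overlined part in part~(i) of Theorem~\ref{T1.3} by the weight $j$ coming from the block of $m$ repeated parts in part~(ii) of Theorem~\ref{T1.3}, and leave the rest of the combinatorial machinery untouched. Concretely, by Theorem~\ref{T1.3}(ii) we have $s_m(n)=\sum_{k\geq 0} k\,p(n-mk)$, so that
\begin{align*}
\sum_{j=0}^\infty (-1)^j s_m(n-j(3j+1)/2)
&= \sum_{k=0}^\infty k \sum_{j=0}^\infty (-1)^j p\bigl(n-mk-j(3j+1)/2\bigr),
\end{align*}
and similarly for the alternating sum starting at $j=1$. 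Thus it suffices to prove, for each fixed $k\geq 0$,
\begin{align*}
\sum_{j=0}^\infty (-1)^j p\bigl(n-mk-j(3j+1)/2\bigr) &= N(n-mk),\\
\sum_{j=1}^\infty (-1)^{j+1} p\bigl(n-mk-j(3j+1)/2\bigr) &= R(n-mk),
\end{align*}
which are precisely statements (i$^*$) and (ii$^*$) of the proof of Theorem~\ref{Cor5.2} with $r=0$ and the shift $mk$ in place of $mr$. Summing the first over $k$ with weight $k$ gives (i); summing the second gives (ii).

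The key steps, in order, are: first invoke Theorem~\ref{T1.3}(ii) to rewrite the left-hand sides of (i) and (ii) as double sums as above; second, observe that the inner sum over $j$ is independent of the $s_m$-structure and is governed entirely by the Bressoud--Zeilberger involution $\varphi_{BZ}$ already recalled in the proof of Theorem~\ref{Cor5.2}, which establishes (i$^*$) and (ii$^*$) (with the trivial reindexing $n-mk-r \leadsto n-mk$); third, substitute these identities back and recognize the resulting sums $\sum_{k\geq 0} k\,N(n-mk)$ and $\sum_{k\geq 0} k\,R(n-mk)$ as the right-hand sides of (i) and (ii). Since every ingredient — Theorem~\ref{T1.3}(ii), the restriction of $\varphi_{BZ}$ to $\mathcal R(n)\cup\bigcup_{j\geq 2\text{ even}}\mathcal P(n-a(j))$, and the conjugation symmetry between positive and negative rank — has already been set up, no new involution is required.

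I do not expect a genuine obstacle here; the proof is essentially a one-line reduction to Theorem~\ref{Cor5.2}. The only point that needs a word of care is bookkeeping: confirming that the combinatorial weight in Theorem~\ref{T1.3}(ii) factors out of the alternating theta-type sum in exactly the same way the weight $mj+r$ did in Theorem~\ref{Cor5.2}, so that the per-$k$ identities (i$^*$)/(ii$^*$) can be applied term by term and then re-summed against the weight $k$. Accordingly, the write-up can be kept very short, stating that the proof is identical to that of Theorem~\ref{Cor5.2} after interpreting the left-hand sides via Theorem~\ref{T1.3}(ii) as in the proofs of Theorems~\ref{Cor4.4} and~\ref{Cor4.10}.
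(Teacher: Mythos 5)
Your proposal is correct and follows exactly the paper's approach: the paper proves this result by noting that the argument is identical to that of Theorem~\ref{Cor5.2}, i.e.\ one uses Theorem~\ref{T1.3}(ii) to reduce to the per-$k$ identities (i$^*$)/(ii$^*$), which are settled by rank-conjugation and the Bressoud--Zeilberger involution, just as you describe. No discrepancies to report.
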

\begin{proof} The proof is identical to that of Theorem \ref{Cor5.2}.
\end{proof}

Garden of Eden partitions we introduced by Hopkins and Sellers in \cite {HS} in connection to the game \textit{Bulgarian solitaire}. They are partition $\lambda$ with all parts less than $\ell(\lambda)-1$. Hence they are precisely the partitions with rank at most $-2$. Denote by $G(n)$ the number of Garden of Eden partitions of $n$. 

\begin{theorem}{\cite[Corollaries 6.2 and 6.3]{M23}} \label{Cor6.2} Let $m,n$, and $r$ be nonnegative integers such that $0\leq r<m$. Then \begin{itemize}\item[(i)] $\displaystyle \sum_{j=0}^\infty (-1)^{j+1} a_{m,r}(n-3j(j+1)/2)= \sum_{j=0}^\infty (mj+r)G(n-mj-r)$
\item[(ii)] $\displaystyle \sum_{j=1}^\infty (-1)^{j+1} s_{m}(n-3j(j+1)/2)= \sum_{j=0}^\infty jG(n-mj)$.
\end{itemize}\end{theorem}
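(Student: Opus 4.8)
By Theorem~\ref{T1.3}, both sides of (i) and (ii) decompose along the same geometric factor $(mj+r)$ (resp.\ $j$) with $p(\cdot)$ replaced by $G(\cdot)$ on the right; hence, exactly as in the proof of Theorem~\ref{Cor5.2}, it suffices to establish for each fixed $k\geq 0$ the single ``theta-type'' identity
\begin{equation*}\sum_{j=0}^\infty (-1)^{j+1} p\bigl(N-3j(j+1)/2\bigr)=G(N),\end{equation*}
where $N=n-mk-r$ (resp.\ $N=n-mk$). So the whole theorem reduces to one combinatorial statement about ordinary partitions, and the plan is to prove that by an involution.

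\textbf{The reduction to Garden of Eden partitions.} Writing $b(j):=3j(j+1)/2$ (note $b(j)$ is the pentagonal-type number $j(3j+3)/2$), I want a sign-reversing involution on $\bigcup_{j\geq 0}\mathcal P(N-b(j))$ whose fixed points are exactly the Garden of Eden partitions of $N$, sitting in the $j=0$ copy, each contributing with sign $(-1)^{0+1}=-1$; combined with the orientation this yields $G(N)=\sum_{j\geq 0}(-1)^{j+1}p(N-b(j))$ up to the global sign, which matches since $G(N)\ge 0$. The natural source for such an involution is Bressoud--Zeilberger's map $\varphi_{BZ}$ used in Theorem~\ref{Cor5.2}: there $\varphi_{BZ}$ pairs partitions counted by pentagonal numbers $a(j)=j(3j+1)/2$ and its fixed set is $\mathcal R(n)$ (rank $>0$). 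Applying the \emph{conjugate} of that involution, or equivalently running Bressoud--Zeilberger on conjugate partitions, produces a map adapted to the ``other'' family of pentagonal numbers and isolates partitions of large \emph{negative} rank; the Garden of Eden partitions are precisely those with rank $\leq -2$, which is exactly the complement one expects to survive when one shifts the pentagonal-number family from $j(3j+1)/2$ to $j(3j+3)/2$. Concretely I would take $\lambda\in\mathcal P(N-b(j))$, conjugate it, apply the $\varphi_{BZ}$-style ``add a staircase row / strip a corner'' move, and conjugate back, checking that the two cases in the definition correspond to $b(j)\to b(j-1)$ and $b(j)\to b(j+1)$ and that exactly the partitions with $\lambda_1\le \ell(\lambda)-2$ (in the $j=0$ slice) have no image.

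\textbf{Carrying it out and the main obstacle.} The steps, in order: (1) invoke Theorem~\ref{T1.3} to strip the $(mj+r)$ / $j$ factor, reducing to the single identity in $N$; (2) use conjugation to restate it as a claim about ranks; (3) set up the shifted pentagonal involution by conjugating Bressoud--Zeilberger's $\varphi_{BZ}$ and verify it is a bijection between the even-$j$ and odd-$j$ unions of $\mathcal P(N-b(j))$ after removing the Garden of Eden partitions from the $j=0$ term; (4) check signs and the $j=0$ boundary case carefully, and confirm that negative indices / negative arguments of $p$ cause no trouble (terms vanish). The genuinely delicate point is (3): one must pin down the exact inequality governing the two branches of the involution after the shift from $a(j)$ to $b(j)=a(j)+j$, and confirm that the fixed-point set is \emph{precisely} $\{\lambda\vdash N: \lambda_1\le \ell(\lambda)-2\}=\{\lambda: r(\lambda)\le -2\}$ rather than rank $\le -1$ or $\le 0$; an off-by-one here changes $G(n)$ into $N(n)$ or $p(n)$. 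I would verify this by tracking how the move alters $\ell(\lambda)$ and $\lambda_1$ (it changes the difference $\lambda_1-\ell(\lambda)$ by a controlled amount tied to $3j$), and by testing small cases such as $N=3,4,5$ where $G(N)$ is known, to make sure the signs and the cutoff rank $-2$ come out right.
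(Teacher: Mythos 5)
Your step (1) — using Theorem~\ref{T1.3} to strip the factor $(mj+r)$ (resp.\ $j$) and reduce everything to a single identity relating $p(\cdot)$ and $G(\cdot)$ — is exactly what the paper does. But the remaining identity, $G(N)=\sum_{j\geq 1}(-1)^{j+1}p\bigl(N-3j(j+1)/2\bigr)$, is the entire content of the step, and you never actually prove it: the paper disposes of it by citing the Hopkins--Sellers involution (a Bressoud--Zeilberger-style map built precisely for the numbers $3j(j+1)/2$ and the rank-$\leq -2$ partitions), whereas you propose to manufacture such an involution by conjugating $\varphi_{BZ}$ and shifting from $a(j)=j(3j+1)/2$ to $b(j)=a(j)+j$, but you never write down the map, the inequality separating its two branches, or the verification of the fixed-point set; you explicitly defer the decisive off-by-one analysis to ``testing small cases.'' As it stands this is a plan for a proof of the key lemma, not a proof, and it is not obvious that ``conjugate, apply $\varphi_{BZ}$, conjugate back'' lands on the sequence $3j(j+1)/2$ (three times triangular numbers, not the other pentagonal family) with Garden of Eden partitions as the exact survivors.

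Moreover, the bookkeeping you do commit to is inconsistent. If a sign-reversing involution on $\bigcup_{j\geq 0}\mathcal P(N-b(j))$ had as fixed points exactly the Garden of Eden partitions of $N$ inside the $j=0$ copy, the resulting identity would be $p(N)-p(N-3)+p(N-9)-\cdots=G(N)$, i.e.\ $p(N)-G(N)=G(N)$, which is false; the discrepancy with the true identity $G(N)=p(N-3)-p(N-9)+\cdots$ is not ``a global sign'' but the presence of the $j=0$ term. (The ``$j=0$'' lower limit in part (i) of the statement is a typo inherited from the display; part (ii) and the Hopkins--Sellers identity start at $j=1$, and your reduced identity must too.) Equivalently, an involution on the full union over $j\geq 0$ has as its fixed points the partitions of rank $\geq -1$, not the Garden of Eden partitions. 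To close the gap, either cite Hopkins and Sellers for the combinatorial proof of $G(N)=\sum_{j\geq 1}(-1)^{j+1}p\bigl(N-3j(j+1)/2\bigr)$, as the paper does, or write out the involution explicitly with its branch condition and fixed-point analysis and with the $j=0$ term handled correctly.
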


\begin{proof}In \cite{HS}, Hopkins and Sellers give an involutions similar to Bressoud and Zeilberger's involution $\varphi_{BZ}$ described in the proof of Theorem \ref{Cor5.2}, and prove combinatorially that $$G(n)=\sum_{j\geq 1}(-1)^{j+1}p(n-3j(j+1)/2).$$ Together with Theorem \ref{T1.3}, this completes the combinatorial proof of Theorem \ref{Cor6.2}. 
\end{proof}

Given a partitions $\lambda$, we denote by $m_\lambda(1)$  the number of parts equal to $1$ in $\lambda$ and by $w(\lambda)$  the number of parts greater than $m_\lambda(1)$ in $\lambda$. Then the crank $cr(\lambda)$ of $\lambda$ is defined \cite{AG} as $$cr(\lambda):=\begin{cases}\lambda_1 & \text{ if } m_\lambda(1)=0,\\ w(\lambda)-m_\lambda(1) & \text{ if } m_\lambda(1)>0.\end{cases}$$ Let \begin{align*}\mathcal C(n)&:=\{\lambda \vdash n \mid cr(\lambda)\geq 0\}\},\\ \mathcal D(n)&:=\{\lambda \vdash n \mid cr(\lambda)> 0\}\},
\end{align*} and define $C(n)=|\mathcal C(n)|$ and $D(n)=|\mathcal D(n)|$.

\begin{theorem}{\cite[Corollary 7.2]{M23}} \label{Cor7.2} Let $m,n$, and $r$ be nonnegative integers such that $0\leq r<m$. Then 
\begin{itemize}\item[(i)] $\displaystyle \sum_{j=0}^\infty (-1)^{j} a_{m,r}(n-j(j+1)/2)= \sum_{j=0}^\infty (mj+r)C(n-mj-r)$
\item[(ii)] $\displaystyle \sum_{j=1}^\infty (-1)^{j+1} a_{m,r}(n-j(j+1)/2)= \sum_{j=0}^\infty (mj+r)D(n-mj-r)$.
\end{itemize}
\end{theorem}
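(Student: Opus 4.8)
The plan is to follow the template of the proof of Theorem~\ref{Cor5.2}. Using Theorem~\ref{T1.3}(i) to write $a_{m,r}(M)=\sum_{k\ge 0}(mk+r)\,p(M-mk-r)$ and interchanging the order of summation, both (i) and (ii) reduce to the following pair of identities, to be proved for every nonnegative integer $N$:
\begin{itemize}
\item[(i$^*$)] $\ds\sum_{j=0}^{\infty}(-1)^{j}\,p\big(N-j(j+1)/2\big)=C(N)$,
\item[(ii$^*$)] $\ds\sum_{j=1}^{\infty}(-1)^{j+1}\,p\big(N-j(j+1)/2\big)=D(N)$,
\end{itemize}
in which the triangular numbers $j(j+1)/2$ take over the role of the pentagonal numbers $j(3j+1)/2$ from Theorem~\ref{Cor5.2}. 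As in that proof, (i$^*$) and (ii$^*$) are equivalent: since $\sum_{j\ge 0}(-1)^{j}p\big(N-j(j+1)/2\big)=p(N)-\sum_{j\ge 1}(-1)^{j+1}p\big(N-j(j+1)/2\big)$, statement (i$^*$) amounts to $\sum_{j\ge 1}(-1)^{j+1}p\big(N-j(j+1)/2\big)=p(N)-C(N)$, and $p(N)-C(N)$ --- the number of partitions of $N$ of negative crank --- equals $D(N)$ by the classical symmetry of the crank, namely that the number of partitions of $N$ with crank $c$ equals the number with crank $-c$ for $N\ge 2$ (a fact with a known bijective proof, also immediate from $M(z,q)=M(1/z,q)$ for the two-variable crank generating function $M(z,q)$). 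The cases $N\le 1$ are checked by hand; here one should note that with the combinatorial crank $cr((1))=-1$, so $\mathcal D(1)=\emptyset$, while the identity forces $D(1)=1$, whence part (ii) holds as stated only with that convention --- equivalently, its right-hand side needs an extra $n-1$ when $m\mid n-1-r$, a phenomenon like the one noted after Theorem~\ref{Cor4.10}.

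It remains to prove (i$^*$) combinatorially, which is the heart of the matter. I would construct a sign-reversing involution on the set $\mathcal T(N):=\bigcup_{j\ge 0}\{j\}\times\mathcal P\big(N-j(j+1)/2\big)$, with $(j,\lambda)$ carrying the sign $(-1)^{j}$, whose fixed-point set is precisely $\{(0,\lambda):\lambda\vdash N,\ cr(\lambda)\ge 0\}$ --- a set of cardinality $C(N)$. Identifying $(j,\lambda)$ with the staircase $\delta_{j}=(j,j-1,\dots,1)$ placed alongside $\lambda$, the involution is of Bressoud--Zeilberger / Hopkins--Sellers type: comparing $\lambda_{1}$ with $\ell(\lambda)+j$, one either transfers the top row of $\delta_{j}$ into $\lambda$ (lowering $j$ by one) or performs the inverse move (raising $j$ by one), the only obstruction occurring at $j=0$, exactly when $\lambda$ has negative crank. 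Restricting this involution to $\{(0,\lambda):cr(\lambda)<0\}\cup\bigcup_{j\ge 1}\{j\}\times\mathcal P\big(N-j(j+1)/2\big)$ then leaves exactly $C(N)$ fixed points, which is (i$^*$); this is the analogue of restricting $\varphi_{BZ}$ to $\mathcal R(n)$ in Theorem~\ref{Cor5.2}. Once (i$^*$) holds, part (i) is immediate from Theorem~\ref{T1.3}(i) and part (ii) follows from part (i) by the crank symmetry above.

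I expect the main obstacle to be the design of this involution near the bottom of the Young diagram --- its interaction with the parts equal to $1$. Because the increment $j(j+1)/2-(j-1)j/2=j$ is smaller than the increment $3j-1$ that occurs in the pentagonal case, the two Bressoud--Zeilberger-type moves overlap far more often than they do for $\varphi_{BZ}$, and it is precisely the bookkeeping of the parts equal to $1$ --- the quantities $m_{\lambda}(1)$ and $w(\lambda)$ out of which the crank is built --- that has to be arranged so that the surviving fixed points are exactly the partitions with $w(\lambda)\ge m_{\lambda}(1)$ together with those having no part equal to $1$. Making this matching exact, rather than merely correct up to the easily identified discrepancy at $N=1$, is the delicate step.
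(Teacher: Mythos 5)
Your reduction to the single identity (i$^*$) via Theorem~\ref{T1.3} and the equivalence of (i$^*$) and (ii$^*$) through the count of negative-crank partitions match the paper (which cites Berkovich and Garvan for the latter; your remark about the $N=1$ anomaly of the combinatorial crank is a fair caveat that the paper passes over silently). The problem is the core of the argument: you do not actually prove (i$^*$). You propose a sign-reversing involution on $\bigcup_{j\ge 0}\{j\}\times\mathcal P\big(N-j(j+1)/2\big)$ of Bressoud--Zeilberger/Hopkins--Sellers type, obtained by comparing $\lambda_1$ with $\ell(\lambda)+j$ and moving a row of the staircase, and you assert that the only obstruction occurs at $j=0$ ``exactly when $\lambda$ has negative crank.'' That assertion is unsupported and, as described, wrong in spirit: a move governed by the comparison of $\lambda_1$ with $\ell(\lambda)+j$ is a rank-type move, and its failure set is naturally described by rank-type inequalities (as in Theorem~\ref{Cor5.2} and in Hopkins--Sellers' Garden of Eden involution, where the survivors are partitions with rank conditions), not by the crank, which is built from $m_\lambda(1)$ and $w(\lambda)$. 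You acknowledge yourself that arranging the fixed points to be exactly the nonnegative-crank partitions is ``the delicate step,'' so the proposal leaves the essential combinatorial content of the theorem unestablished.

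The paper takes a different and complete route precisely to avoid inventing such an involution: adding the parts $1,2,\ldots,j$ to a partition of $n-j(j+1)/2$ is a bijection onto the partitions of $n$ with $\mex(\lambda)>j$, so the differences $p\big(n-\tfrac{j(j+1)}{2}\big)-p\big(n-\tfrac{(j+1)(j+2)}{2}\big)$ count partitions with $\mex(\lambda)=j+1$, and the alternating triangular-number sum telescopes to the number of partitions of $n$ with odd minimal excludant; one then invokes the combinatorial results of Hopkins--Sellers--Yee and of Konan that $C(n)$ equals exactly this odd-mex count. If you want to salvage your outline, the realistic fix is to replace your conjectural crank involution by this mex argument (or to supply a genuinely new bijection between nonnegative-crank partitions and odd-mex partitions, which is precisely the nontrivial content of the cited works).
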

\begin{proof} Berkovich and Gravan \cite{BG} proved combinatorially that $D(n)$ is also equal to the number of partitions of $n$ with negative crank. Then, as in the proof of Theorem \ref{Cor5.2},   statements (i) and (ii) of  Theorem \ref{Cor7.2} are equivalent. 

Given a partition $\lambda$, the smallest positive integer that is not a part of $\lambda$ is called the minimal excludant of $\lambda$  and is denoted by $\mex(\lambda)$ (see \cite{GK,AN}). For example, $$\mex(7,7,4,2,1,1)=3.$$
If $n, j$ are nonnegative integers with $0<j(j+1)/2\leq n$, and $\lambda \in \mathcal P(n-j(j+1)/2)$, the transformation that adds parts $1, 2, \ldots, j$ to $\lambda$ is a bijection from $\lambda \in \mathcal P(n-j(j+1)/2)$ to the set of partitions $\lambda \in \mathcal P(n)$ with $\mex(\lambda)>j$. This shows combinatorially that, for $n, j\geq 0$, we have  $$p\left(n-\frac{j(j+1)}{2}\right)-p\left(n-\frac{(j+1)(j+2)}{2}\right)=|\{\lambda \in \mathcal P(n) \mid \mex(\lambda)=j+1\}|.$$ Therefore, we have a combinatorial proof that $$\sum_{j\geq 0} (-1)^jp\left(n-\frac{j(j+1)}{2}\right)=|\{\lambda \in \mathcal P(n) \mid \mex(\lambda) \text{ odd}\}|.$$
Hopkins, Sellers and Yee \cite{HSY}, and also Konan \cite{K}, proved combinatorially that $$C(n)=|\{\lambda \in \mathcal P(n) \mid \mex(\lambda) \text{ odd}\}|.$$ Together with Theorem \ref{T1.3}, this completes the combinatorial proof of Theorem \ref{Cor7.2}.
\end{proof}
\begin{theorem}{\cite[Corollary 7.3]{M23}} \label{Cor7.3} Let $m,n$  be nonnegative integers. Then 
\begin{itemize}\item[(i)] $\displaystyle \sum_{j=0}^\infty (-1)^{j} s_{m}(n-j(j+1)/2)= \sum_{j=0}^\infty jC(n-mj)$
\item[(ii)] $\displaystyle \sum_{j=1}^\infty (-1)^{j+1} s_{m}(n-j(j+1)/2)= \sum_{j=0}^\infty jD(n-mj)$.
\end{itemize}
\end{theorem}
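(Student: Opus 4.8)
The plan is to reduce Theorem~\ref{Cor7.3} to Theorem~\ref{Cor7.2} exactly as Theorem~\ref{Cor5.3} was reduced to Theorem~\ref{Cor5.2}. The key observation is that Theorem~\ref{T1.3} gives us two ``parallel'' decompositions: part (i) writes $a_{m,r}(n)$ as $\sum_j (mj+r)p(n-mj-r)$, and part (ii) writes $s_m(n)$ as $\sum_j jp(n-mj)$. Every identity for $a_{m,r}$ in the excerpt is proved by fixing the index $j$, extracting the weight $mj+r$, and proving a pure partition identity about $p(n-mj-r)$; the companion $s_m$ identity is obtained by the same manipulation with $mj+r$ replaced by $j$ and the shift $mj+r$ replaced by $mj$. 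So the entire content of Theorem~\ref{Cor7.3} is the same combinatorial fact about $p$ that underlies Theorem~\ref{Cor7.2}.

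Concretely, I would first invoke Theorem~\ref{T1.3}(ii) to reduce each of (i) and (ii) to showing, for every fixed $k\geq 0$,
\begin{align*}
\sum_{j=0}^\infty (-1)^j p\!\left(n-mk-\tfrac{j(j+1)}{2}\right) &= C(n-mk), \\
\sum_{j=1}^\infty (-1)^{j+1} p\!\left(n-mk-\tfrac{j(j+1)}{2}\right) &= D(n-mk).
\end{align*}
Next, as in the proof of Theorem~\ref{Cor7.2}, I would note that by the Berkovich--Garvan result $D(N)$ equals the number of partitions of $N$ with negative crank, and since $p(N) = C(N) + (\text{partitions of }N\text{ with negative crank})$, the two displayed statements are equivalent; it suffices to prove the first. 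Then I would recall the two combinatorial ingredients already assembled in the proof of Theorem~\ref{Cor7.2}: the \mex\ bijection (adding parts $1,2,\dots,j$) which gives the telescoping identity $\sum_{j\geq 0}(-1)^j p(N - j(j+1)/2) = |\{\lambda\in\mathcal P(N)\mid \mex(\lambda)\text{ odd}\}|$, and the Hopkins--Sellers--Yee / Konan result $C(N) = |\{\lambda\in\mathcal P(N)\mid \mex(\lambda)\text{ odd}\}|$. Applying both with $N = n-mk$ finishes the reduction.

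I do not expect a genuine obstacle here: the proof is a transcription of the proof of Theorem~\ref{Cor7.2} with the single substitution ``$(mj+r)$, shift $mj+r$'' $\mapsto$ ``$j$, shift $mj$'', justified by the symmetry between the two parts of Theorem~\ref{T1.3}. The only point requiring a word of care is that in Theorem~\ref{T1.3}(ii) the summation index runs over part sizes $j$ starting from $0$ (the term $j=0$ contributing nothing), so the outer sum $\sum_{j\geq 0} j\,C(n-mj)$ on the right-hand side of Theorem~\ref{Cor7.3} matches the outer sum $\sum_{j\geq 0} j\,|\mathcal S_{j,m}(n)|$ from the proof of Theorem~\ref{T1.3}(ii) term by term once $|\mathcal S_{j,m}(n)|$ is replaced by the alternating sum via the $\mex$/crank identity. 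I would state the proof in one short paragraph: ``The proof is identical to that of Theorem~\ref{Cor7.2}, using Theorem~\ref{T1.3}(ii) in place of Theorem~\ref{T1.3}(i),'' perhaps spelling out the reduced pair of identities above for the reader's convenience.
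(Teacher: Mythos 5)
Your proposal is correct and matches the paper's approach exactly: the paper's proof of Theorem~\ref{Cor7.3} simply states that it is identical to that of Theorem~\ref{Cor7.2}, using Theorem~\ref{T1.3}(ii) in place of part (i), which is precisely the reduction you carry out. Your spelled-out version (the fixed-$k$ identities, the Berkovich--Garvan equivalence of (i) and (ii), the minimal-excludant telescoping, and the Hopkins--Sellers--Yee/Konan identity) is just a more explicit transcription of the same argument.
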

\begin{proof}The proof is identical to that of Theorem \ref{Cor7.2}.
\end{proof}

\end{document}